\documentclass{amsart}
\usepackage{a4wide, graphicx, amsmath, amsfonts, amsthm, amssymb, latexsym,
multirow, enumerate, MnSymbol}   

\usepackage[colorlinks]{hyperref}
\hypersetup{linkcolor=blue, urlcolor=blue, citecolor=red}

\newtheorem{thm}{Theorem}[section]
\newtheorem{lemma}[thm]{Lemma}

\newcommand{\proofref}[1]{\noindent {\emph{Proof of Theorem} \ref{#1}.\ }}

\newcommand{\genset}[1]{\langle#1\rangle}
\newcommand{\pres}[2]{\langle\:#1\:|\:#2\:\rangle}
\newcommand{\N}{\mathbb{N}}
\newcommand{\im}{\operatorname{im}}
\newcommand{\pst}{\:|\:}

\begin{document}

\title[Disjoint unions of free monogenic semigroups]{A classification of
disjoint unions of two or three copies of the free
monogenic semigroup}

\author{N. Abu-Ghazalh, J. D. Mitchell,  Y. P\'eresse, N. Ru\v{s}kuc} 
\thanks{The first author is financially supported by The Ministry of Higher
Education in Saudi Arabia (Princess Nora Bint Abdul Rahman University in
Riyadh, Ref number RUG0003).}

\begin{abstract}
We prove that, up to isomorphism and anti-isomorphism, there are only two
semigroups which are the union of two copies of the free 
monogenic semigroup. Similarly,  there are only nine semigroups which are the
union of three copies of the free monogenic semigroup.
We provide finite presentations for each of these semigroups.
\end{abstract}

\maketitle

\section{Introduction and Preliminaries}

There are several well-known examples of structural theorems for semigroups,
which involve decomposing a semigroup into a disjoint union of subsemigroups.
For example, up to isomorphism, the Rees Theorem states that every completely
simple semigroup is a Rees matrix semigroup over a group $G$, and is thus a
disjoint union of copies of $G$, see \cite[Theorem 3.3.1]{Howie1995aa}; every
Clifford semigroup is a strong semilattice of groups and as such it is a disjoint
union of its maximal subgroups, see \cite[Theorem 4.2.1]{Howie1995aa}; every
commutative semigroup is a semilattice of archimedean semigroups,
see \cite[Theorem 2.2]{Grillet1995aa}. 

If $S$ is a semigroup which can be decomposed into a disjoint union of
subsemigroups, then it is natural to ask how the properties of the subsemigroups
influence $S$. For example, if the subsemigroups are finitely generated, then so
is $S$.  There are several further examples in the literature where such
questions are addressed: Ara\'ujo et al. \cite{Araujo2001aa} consider the finite
presentability of semigroups which are the disjoint union of finitely presented
subsemigroups; Golubov \cite{Golubov1975aa} showed that a semigroup which is the
disjoint union of residually finite subsemigroups is residually finite; in
\cite{Abu-Ghazalh2013aa} the authors proved that every semigroup which is a disjoint
union of finitely many copies of $\N$ is finitely presented; further references
are \cite{Gray2010ab, Ruskuc1999aa}.

In this paper we completely classify those semigroups which are the disjoint
union of two or three copies of the free monogenic semigroup. 

The main theorems of this paper are the following. 

\begin{thm} 
  \label{2copies} 
  Let $S$ be a semigroup. Then $S$ is a disjoint union of two copies of the free
  monogenic semigroup if and only if $S$ is isomorphic or anti-isomorphic to the
  semigroup defined by one of the following presentations: 
  \begin{enumerate}[\rm (i)] 
    \item $ \langle\ a,b\ |\ ab=ba=a^k\ \rangle$ for some $k\geq 1$; 
    \item $ \langle\ a,b\ |\ ab=a^2,\ ba=b^2\ \rangle$.
  \end{enumerate} 
\end{thm}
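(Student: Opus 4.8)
The plan is to treat the two implications separately, handling the classification (forward) direction by a short case analysis and reserving the real effort for the converse. Throughout, write $S = A \cup B$ with $A = \genset{a} \cong \N$ and $B = \genset{b} \cong \N$ disjoint subsemigroups. Because $A$ and $B$ are free monogenic subsemigroups, products of two elements from the same copy simply add exponents, so the entire multiplication is governed by the two ``mixed'' products $ab$ and $ba$, each of which lies in $A$ or in $B$. This gives four configurations, but interchanging $a$ and $b$ (an isomorphism) and passing to the opposite semigroup (an anti-isomorphism) reduce these to just two: \emph{(I)} both $ab, ba \in A$, and \emph{(II)} $ab \in A$ while $ba \in B$.

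In configuration (I), write $ab = a^p$ and $ba = a^q$. Associativity applied to the single word $aba$ gives $a^{p+1} = (ab)a = a(ba) = a^{q+1}$, and since $a$ has infinite order this forces $p = q$; hence $ab = ba = a^k$ with $k = p \geq 1$, which is presentation (i). In configuration (II), write $ab = a^p \in A$ and $ba = b^q \in B$. Using only $ab = a^p$, a one-line induction shows $ab^{j} = a^{j(p-1)+1}$ for all $j \geq 1$. Associativity on $aba$ then yields $a^{p+1} = (ab)a = a(ba) = ab^q = a^{q(p-1)+1}$, so comparing exponents gives $p = q(p-1)$, i.e.\ $(p-1)(q-1) = 1$, whence $p = q = 2$; thus $ab = a^2$ and $ba = b^2$, which is presentation (ii). In each case $S$ satisfies the defining relations of the corresponding presentation, so sending the generators of the presented semigroup to $a, b \in S$ extends to a surjective homomorphism. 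As both semigroups have underlying set $\{a^i : i \geq 1\} \cup \{b^j : j \geq 1\}$ with all these elements distinct, the homomorphism is a bijection, and $S$ is isomorphic to the presented semigroup.

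For the converse I would certify each presentation by an explicit model. For (ii), take two disjoint tagged copies of $\N$, i.e.\ the set $\{1,2,\dots\}\times\{a,b\}$ with product $(m,t)(n,u) = (m+n,t)$ inheriting the left tag and adding exponents; this is directly associative, identifies $ab$ with $a^2$ and $ba$ with $b^2$, and has $\genset{a}$, $\genset{b}$ as disjoint free monogenic subsemigroups with union the whole set. For (i), take $\{a^i\} \cup \{b^j\}$ with exponents adding inside each copy and with every mixed product set to $a^i b^j = b^j a^i = a^{i+j(k-1)}$; associativity is a routine check. In either case the model is generated by two elements satisfying the relevant relations, hence is a quotient of the presented semigroup, while the relations let one rewrite every word to a power of a single generator, so the presented semigroup is spanned by the $a^i$ and $b^j$; the model then shows these are pairwise distinct, forcing the quotient map to be an isomorphism. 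This proves (i) and (ii) are disjoint unions of two copies of $\N$, and since (i) is commutative and (ii) is not, they represent two genuinely distinct classes.

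I expect the main obstacle to be exactly this converse ``non-collapse'' step: showing that in the presented semigroups $\genset{a}$ and $\genset{b}$ remain infinite and disjoint, rather than fusing or truncating. The forward case analysis is essentially mechanical once one notices that associativity on the word $aba$ alone pins down the exponents $p$ and $q$; the substantive work lies in producing the models that guarantee the normal forms $a^i$ and $b^j$ are all distinct.
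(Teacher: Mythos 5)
Your proof is correct. The forward direction is essentially identical to the paper's: the same reduction of the four configurations to two via the swap $a\leftrightarrow b$ and the opposite semigroup, the same associativity computation on the single word $aba$ forcing $p=q$ in case (I) and $(p-1)(q-1)=1$, hence $p=q=2$, in case (II), and the same final step of observing that the induced surjection from the presented semigroup onto $S$ must be injective because both sides consist of the pairwise distinct elements $a^i, b^j$ (the paper packages this last step as its Lemma \ref{homo-iso}). Where you genuinely diverge is the converse. The paper splits the ``non-collapse'' verification into two separate devices: Lemma \ref{lem 3.7}, a homomorphism onto $(\N\cup\{0\},+)$ weighting the generators (e.g.\ $\varphi(a)=1$, $\varphi(b)=k-1$) to certify infiniteness, together with ad hoc rewriting invariants (``no relation applies to a power of $b$'', ``any product equal to a power of $a$ starts with $a$'') to certify disjointness. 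You instead exhibit a single explicit model on the set $\{a^i\}\sqcup\{b^j\}$ --- for (i) with mixed products $a^ib^j=b^ja^i=a^{i+j(k-1)}$, for (ii) the left-tag product on $\N\times\{a,b\}$ --- verify associativity directly, and pull back distinctness of normal forms through the quotient map. Your route handles infiniteness and disjointness in one stroke and makes the isomorphism type of each presented semigroup completely transparent, at the cost of the (routine but unavoidable) associativity check; the paper's route avoids that check but needs two separate arguments per presentation. Both are sound, and your model-based method is in fact closer in spirit to the multiplication-table arguments the paper later deploys for the three-copy case.
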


\begin{thm}\label{3copies}
Let $S$ be a semigroup. Then $S$ is  a disjoint union of three copies of the free monogenic semigroup if and only if  
$S$ is isomorphic or anti-isomorphic to the semigroup defined by one of the following presentations:
\begin{enumerate}[\rm (i)]
\item 
$\langle\  a,b,c\ |\  ab=a^i,ba=a^i,ac=a^j, ca=a^j,bc=a^k,cb=a^k \ \rangle$
where  $i+j=k+2$ and  $i,j,k\in \N$;

\item
$\langle\  a,b,c\ |\ ab=a^i,ba=a^i,ac=a^j,ca=a^j,bc=b^k,cb=b^k\ \rangle$
where $i+j+k-ik=2$ and  $i,j,k\in \N$;

\item
$\langle\  a,b,c\ |\ ab=a^i,ba=a^i,ac=a^i,ca=a^i,bc=c^2,cb=b^2\ \rangle$
where $i\in \N$;

\item
$\langle\  a,b,c\ |\ ab=a^i,ba=a^i,ac=c^2,ca=a^2,bc=c^i,cb=a^i\ \rangle$
where $i\in \N$;

\item
$\langle\  a,b,c\ |\ ab=a^i,ba=a^i,ac=c^2,ca=a^2,bc=c^i,cb=c^i\ \rangle$
where $i\in \N$;

\item 
$\langle\ a,b,c\ |\ ab=b^2,ba=a^2,ac=c^2,ca=a^2,bc=c^2,cb=b^2\ \rangle$;

\item 
$\langle\  a,b,c\ |\ ab=b^2,ba=a^2,ac=c^2,ca=b^2,bc=c^2,cb=a^2\ \rangle$;

\item
$\langle\  a,b,c\ |\ ab=b^2,ba=a^2,ac=c^2,ca=a^2,bc=c^2,cb=a^2\ \rangle$; 

\item
$\langle\  a,b,c\ |\ ab=b^2,ba=a^2,ac=b^i,ca=a^i,bc=a^i,cb=b^i\ \rangle$
where $i\in \N.$
\end{enumerate}
\end{thm}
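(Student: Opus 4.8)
The plan is to prove the two implications separately, the easy ``if'' direction by direct verification and the harder ``only if'' direction by a structural analysis of an abstract union $S = A \sqcup B \sqcup C$ of three copies of the free monogenic semigroup, with generators $a$, $b$, $c$ and copies $A = \genset{a}$, $B = \genset{b}$, $C = \genset{c}$.

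For the ``if'' direction I would verify directly that each of the presentations (i)--(ix) defines a semigroup that decomposes as claimed. Concretely, for a given presentation I would write down the multiplication it forces on the set $\{a^n : n \ge 1\} \sqcup \{b^n : n \ge 1\} \sqcup \{c^n : n \ge 1\}$, check that this multiplication is associative (a finite, if tedious, verification), and confirm that the relations present exactly this semigroup, so that the three subsets are genuinely disjoint copies of $\N$. Alternatively one may lean on the finite-presentability result of \cite{Abu-Ghazalh2013aa} to know in advance that such a union is presented by finitely many relations, and then only check that the listed relations suffice.

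The substance is the ``only if'' direction. The first key step is a collection of \emph{propagation formulas}. Since every element of $S$ is a power of a single generator, each of the six cross products $ab, ba, ac, ca, bc, cb$ is itself a power of one generator, and I claim these six values determine the entire multiplication. The cleanest instance is: if $xy = x^i$ lands back in the copy of its left factor, then an easy induction using $x^{m}y^{n} = x^{m-1}(x y^{n})$ and $x y^{n} = (x y^{n-1}) y$ yields $x^{m} y^{n} = x^{m + n(i-1)}$ for all $m,n \ge 1$. The analogous formulas when a product lands in the copy of its right factor, or in the third copy, are similar in spirit but couple several of the six products together, and establishing them uniformly is the technical backbone of the argument. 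With the multiplication thus reduced to the six generator products, I would classify by the \emph{landing pattern}, namely by which of $A$, $B$, $C$ each of the six products lies in. A priori there are $3^{6}$ patterns, but the symmetry group generated by the permutations of the three copies and by reversal of the multiplication (passing to the opposite semigroup), of order $12$, collapses these to a manageable list. For each surviving pattern I would substitute the propagation formulas into the associativity identities, the decisive ones being the three-generator relations $(ab)c = a(bc)$, $(ac)b = a(cb)$, $(ba)c = b(ac)$ and their relatives; each such identity equates two exponents and so yields a Diophantine condition. These are precisely the constraints $i+j=k+2$ in (i) and $i+j+k-ik=2$ in (ii), together with the equations forcing the specific exponents in (iii)--(ix). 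Patterns for which these equations have no solution in $\N$ are eliminated, while patterns in which two of the generators already generate a disjoint union of two copies are pinned down by Theorem \ref{2copies}.

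I expect the main obstacle to be the bookkeeping of this case analysis: proving the propagation formulas in the awkward cases where a product falls into the third copy, organizing the landing patterns so that the symmetry reduction is applied without gaps or double counting, and checking for each candidate that \emph{all} associativity identities hold, not merely the ones used to extract the constraints, so that the configuration is genuinely realizable. Matching each realizable configuration to precisely one of (i)--(ix), up to isomorphism and anti-isomorphism, is then a matter of normalizing the parameters.
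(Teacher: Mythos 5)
Your plan for the forward (``only if'') direction coincides with the paper's: what you call the \emph{landing pattern} is the paper's \emph{type} of $S$, the $3^6$ types are reduced modulo generator permutations and reversal, the survivors are eliminated or constrained by exactly the associativity computations you describe, and Theorem \ref{2copies} is invoked for the pairs that form subsemigroups. Two divergences are worth recording. First, the paper does not rely on the order-$12$ symmetry reduction alone (which still leaves on the order of sixty orbits to inspect); it first proves Lemma \ref{thm 3.1} --- for any such $S$ at least one of $\genset{a}\cup\genset{b}$, $\genset{a}\cup\genset{c}$, $\genset{b}\cup\genset{c}$ is a subsemigroup --- and this, fed into Theorem \ref{2copies}, is what collapses the analysis to nine types; you would do well to prove this small lemma rather than wade through all the orbits, though your brute-force version is not wrong. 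Second, for the converse direction the paper avoids your explicit-model-plus-associativity verification entirely: it works with the presented semigroup itself, proving each $\genset{x}$ infinite via a homomorphism onto $(\N,+)$ (Lemma \ref{lem 3.7}) and proving disjointness via homomorphisms onto small, explicitly tabulated finite semigroups or via invariants of word endings; and it replaces your claim that ``the six products determine the multiplication'' by Lemma \ref{homo-iso}, which says that any generator-matching surjection between two such unions is automatically an isomorphism. Your route for this half is workable --- once the propagation formulas are in place the associativity check is a finite list of exponent identities --- but it is considerably heavier to execute than the paper's separating-homomorphism arguments, and the well-definedness of your model must be established before, not after, the propagation formulas are used. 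I see no genuine gap in either half, only a harder road in the second.
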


We prove Theorems \ref{2copies} and \ref{3copies} in Sections \ref{sec 2} and
\ref{sec 3} respectively. 

Let $A$ be a set, and let $S$ be any semigroup. 
Then we denote by $A^+$ the \emph{free semigroup} on $A$,
which  consists of the non-empty words over $A$. 
Any mapping $\psi: A \rightarrow S$ can be extended in a unique way to a
homomorphism $\phi: A^+\rightarrow S$, and $A^+$ is determined up to
isomorphism by these properties. 
If $A$ is a generating set for $S$, then the identity mapping on $A$ induces an
epimorphism
$\pi : A^+\rightarrow S$.  The kernel $\ker(\pi)$ is a congruence on $S$; if
$R\subseteq A^+\times A^+$ generates this congruence we say that
$\langle A\pst R\rangle$ is a presentation for $S$.  We say that $S$
\emph{satisfies a relation} $(u,v)\in A^+\times A^+$ if $\pi(u)=\pi(v)$; we
write $u=v$ in this case.  Suppose we are given a set $R\subseteq A^+\times
A^+$ and two words $u,v\in A^+$.  
We write $u\equiv v$ if $u$ and $v$ are equal as elements of $A^{+}$.  
We say that the relation $u=v$ is a
\emph{consequence} of $R$ if there exist words $u\equiv w_1,w_2,\dots,
w_{k-1},w_k\equiv v$ ($k\geq 1$) such that for each $i=1,\dots,k-1$ we can
write $w_i\equiv \alpha_i u_i \beta_i$ and $w_{i+1}\equiv \alpha_i v_i\beta_i$
where $(u_i,v_i)\in R$ or $(v_i,u_i)\in R$. We say that $\langle A\pst
R\rangle$ is a presentation for $S$ if and only if $S$ satisfies all relations
from $R$, and every relation that $S$ satisfies is a consequence of $R$: see
\cite[Proposition 1.4.2]{Lallement1979aa}. If $A$ and $R$ are finite,
then $S$ is finitely presented.

Let $\rho$ be a congruence on a semigroup $S$, and let $\phi:S\rightarrow T$ be
a homomorphism such that $\rho \subseteq \ker \phi$. Then there is a
unique homomorphism $\beta: S/\rho \rightarrow T$ defined by 
$s/\rho\mapsto \phi(s)$ and such that $\im\beta= \im\phi$; 
\cite[Theorem 1.5.3]{Howie1995aa}.
Let $S$ be the semigroup defined by the presentation $\langle A\ |\ R\rangle $. If $T$ is any semigroup satisfying the relations $R$,  then $T$
is a homomorphic image of $S$.

\begin{lemma} \label{lem 3.7}
Let $A$ be a set, let $\rho$ be a congruence on $A^+$, let 
$\varphi:A \to\N\cup\{0\}$ be any mapping, and let 
$\psi :A^+ \to  \N\cup\{0\}$ be the unique homomorphism extending 
$\varphi$. If $\rho\subseteq \ker\psi$ and $a\in A$
such that $\varphi(a)\neq 0$, then $\langle a/\rho \rangle$ is an infinite
subsemigroup of $A^+/\rho$.
\end{lemma}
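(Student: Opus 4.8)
The plan is to show that the powers $(a/\rho)^n = a^n/\rho$ for $n\geq 1$ are pairwise distinct in $A^+/\rho$; since $\genset{a/\rho}$ consists precisely of these elements, this immediately yields that the subsemigroup is infinite. The single idea driving everything is that $\psi$, being the homomorphic extension of $\varphi$ from the free semigroup $A^+$ into the additive semigroup $\N\cup\{0\}$, records a weighted letter-count: for a word $w$, the value $\psi(w)$ is the sum of the values that $\varphi$ takes on the letters of $w$ (with multiplicity). In particular $\psi(a^n)=n\varphi(a)$ for every $n\geq 1$.

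First I would use the hypothesis $\rho\subseteq\ker\psi$, which makes $\psi$ constant on $\rho$-classes, so that any identification forced by $\rho$ is respected by the weight $\psi$. Concretely, suppose $a^m/\rho=a^n/\rho$ for some $m,n\geq 1$; then $(a^m,a^n)\in\rho\subseteq\ker\psi$, whence $\psi(a^m)=\psi(a^n)$. By the previous observation this is the integer equation $m\varphi(a)=n\varphi(a)$ in $\N\cup\{0\}$.

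Next I would convert this into the desired conclusion. Since $\varphi(a)\neq 0$, it is a strictly positive integer and may be cancelled, forcing $m=n$. Hence no two distinct powers of $a/\rho$ coincide, and $\genset{a/\rho}=\set{a^n/\rho}{n\geq 1}$ is infinite, as required.

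There is no serious obstacle here: the argument is a direct exploitation of the weight $\psi$ as an invariant that separates the powers of $a$. The only point requiring a word of care is the final cancellation, which relies on $\varphi(a)$ being strictly positive — precisely the stated hypothesis $\varphi(a)\neq 0$ — and without which all powers could in principle collapse into a single $\rho$-class.
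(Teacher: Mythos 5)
Your proof is correct and uses essentially the same idea as the paper: the weight homomorphism $\psi$ is constant on $\rho$-classes and takes the value $n\varphi(a)$ on $a^n$, so the positivity of $\varphi(a)$ separates all powers of $a/\rho$. The paper packages this slightly more abstractly, factoring $\psi$ through the quotient to get $\overline{\psi}:A^+/\rho\to\N\cup\{0\}$ and invoking the fact that homomorphisms send elements of finite order to elements of finite order, whereas you verify directly that $a^m/\rho=a^n/\rho$ forces $m\varphi(a)=n\varphi(a)$ and hence $m=n$; the two arguments are interchangeable.
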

\begin{proof}
Since $\rho\subseteq\ker\psi$, it follows that 
$\overline{\psi}:A^+/\rho \rightarrow \N \cup \{0\}$ defined by 
$ \overline{\psi}(w/\rho)=\psi(w)$ is a
homomorphism. Homomorphisms map elements of
finite order to elements of finite order, and since $\varphi(a)\neq 0$ does
not have finite order, $a/\rho$ must have infinite order in $A^+/\rho$.
\end{proof}

Let $\genset{a}$ be the free monogenic semigroup. Then any two non-empty subsemigroups $S$ and $T$ of $\genset{a}$ 
have non-empty 
intersection, since $a^i\in S$ and $a^j\in T$ implies $a^{ij}\in S\cap T$. 
Let $S$ be a semigroup which is the disjoint union of $m\in \N$ copies of the free monogenic semigroup, and let $a_1, \ldots, a_m\in S$ be the generators of these copies. 
Suppose that $S$ is also the disjoint union of $n\in \N$ copies of the free monogenic semigroup. Then there exist 
$b_1, \ldots, b_n\in S$ such that  $\genset{b_1}, \ldots, \genset{b_n}$ are free,  disjoint, and 
$$S=\genset{a_1}\cup\cdots \cup \genset{a_m}=\genset{b_1}\cup\cdots \cup\genset{b_n}.$$
If $n>m$, say, then there exist $i, j$ such that $b_i, b_j\in \genset{a_k}$ for some $k$. But then 
$\genset{b_i}\cap \genset{b_j}\not=\emptyset$, a contradiction.
Hence a semigroup cannot be the disjoint union of $m$ and $n$ copies of the free monogenic semigroup 
when $n\not=m$.


\begin{lemma}
\label{homo-iso}
Let $S$ and $T$ be semigroups which are the disjoint union of $m\in \N$ copies
of the free monogenic semigroup, and let 
$A=\{a_1, \ldots, a_m\}$ and $B=\{b_1, \ldots, b_m\}$ be the
generators of these copies in $S$ and $T$, respectively. 
Then every homomorphism $\varphi:T\to S$ such that
$\varphi(a_i)=b_i$ for all $i$ is an isomorphism. 
\end{lemma}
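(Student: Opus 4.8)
The plan is simply to check that $\varphi$ is a bijection; since it is assumed to be a homomorphism, this immediately makes it an isomorphism. The first thing I would record is that $\varphi$ respects the two decompositions copy-by-copy. Reading the hypothesis as $\varphi(b_i)=a_i$, the homomorphism property gives $\varphi(b_i^n)=a_i^n$ for every $n\in\N$, so $\varphi$ maps the free monogenic subsemigroup $\genset{b_i}$ of $T$ into $\genset{a_i}$, and clearly onto it. Surjectivity of $\varphi$ is then immediate, since every element of $S$ lies in some copy $\genset{a_i}$ and hence has the form $a_i^n=\varphi(b_i^n)$.

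The substantive step is injectivity. I would take arbitrary elements $b_i^s\in\genset{b_i}$ and $b_j^t\in\genset{b_j}$ of $T$ and assume $\varphi(b_i^s)=\varphi(b_j^t)$, that is, $a_i^s=a_j^t$ in $S$. At this point I would invoke the two features of the decomposition of $S$ that are recorded earlier in the excerpt: the copies $\genset{a_1},\dots,\genset{a_m}$ are pairwise \emph{disjoint}, which forces $i=j$; and each $\genset{a_i}$ is \emph{free} monogenic, so $a_i^s=a_i^t$ forces $s=t$. Hence $b_i^s\equiv b_j^t$, and $\varphi$ is injective, completing the argument.

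The key point — and really the only obstacle — is to notice that nothing beyond these two properties is required: an element of either semigroup is completely determined by the index of the copy containing it together with its exponent within that copy, and $\varphi$ preserves both pieces of data. So the work is conceptual rather than computational, and I would expect the proof to be short once the copy-by-copy behaviour of $\varphi$ is isolated at the outset. The one thing to be careful about is to use disjointness and freeness in the codomain $S$, exactly where the equality $a_i^s=a_j^t$ is being analysed.
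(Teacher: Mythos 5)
Your proof is correct and follows essentially the same route as the paper's: both establish that $\varphi$ is a bijection by noting that surjectivity is immediate and deducing injectivity from the disjointness of the copies (to match indices) together with the freeness of each copy (to match exponents) in the codomain. You were right to read the hypothesis as $\varphi(b_i)=a_i$ (the statement has the generators transposed), and your version even makes explicit the final step $s=t$, which the paper's proof leaves implicit.
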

\begin{proof}
Since $\varphi$ is surjective, it follows that the function 
$f:\{1,\ldots, m\}\to \{1,\ldots, m\}$ defined by $\varphi(a_i)\in \genset{b_{f(i)}}$ is a bijection. 

Suppose that there exist $x,y\in S$
such that $\varphi(x)=\varphi(y)$. Then there exist $a,b\in A$ such
that $x=a^i$ and $y=b^j$ for some $i,j\in \N$. It follows that
$\varphi(a)^i=\varphi(b)^j$, which implies that 
$\varphi(a), \varphi(b)\in \genset{c}$ for some $c\in B$. Hence $a=b$, since $f$ is a bijection, and so $x=y$. 
\end{proof}
Since the free monogenic semigroup is anti-isomorphic to itself, it follows that a semigroup $S$ is the disjoint union of $m$ copies of the free monogenic semigroups if and only if any semigroup anti-isomorphic to $S$ has this property.

\section{Two copies of the free monogenic semigroup}\label{sec 2}
In this section we prove Theorem \ref{2copies}.\vspace{\baselineskip}

\proofref{2copies}  
($\Leftarrow$) 
To prove the converse implication, it suffices to show that the semigroups mentioned in Theorem \ref{2copies} are disjoint unions of two copies of the free monogenic semigroup, since this is a property preserved by (anti-)isomorphisms.

Let $m\in \N$ be arbitrary and let $S$ be the semigroup defined by the
presentation
$\pres{a,b}{ab=ba=a^m}.$
It is clear that every element of $S$ is a power of $a$ or $b$, and so
$S=\genset{a}\cup \genset{b}$. 
Since there is no relation in the presentation that can be applied to a power
of $b$, it follows that $\genset{a}\cap \genset{b}=\emptyset$ and 
$\genset{b}$ is infinite. 
We show that $\genset{a}$ is infinite using Lemma \ref{lem 3.7}.
Let $\rho$ be the congruence on $\{a,b\}^+$ generated by the relations
$ab=a^m$ and $ba=a^m$, let 
$\varphi:\{a,b\} \rightarrow \N$ be defined by
$\varphi (a)=1$, $\varphi (b)=m-1$, and let 
$\psi :\{a,b\}^+ \rightarrow  \N$ be the unique homomorphism extending
$\varphi$. Then  
$$\psi(ab)=\psi(a)+\psi(b)=1+m-1=m=\psi(a^m)$$ and, 
similarly, $\psi(ba)=\psi(a^m)$. Hence $\rho\subseteq \ker \psi$ and so
$\genset{a}$ is infinite in $S$, by Lemma \ref{lem 3.7}. 

Let $T$ be the semigroup defined by the presentation 
$\pres{a,b}{ab=a^2,ba=b^2}$. Then as above $T=\genset{a}\cup
\genset{b}$. Any product of $a$ and $b$ equal to a power of $a$ must start
with $a$ and any product equal to a power of $b$ must start with $b$. 
Hence $\genset{a}\cap \genset{b}=\emptyset$. 
The proof that $\genset{a}$ and $\genset{b}$ are infinite follows using a
similar argument as above but where
$\varphi:\{a,b\}\to \N$ is defined by 
$\varphi(a)=1=\varphi(b)$. 
\vspace{\baselineskip}

($\Rightarrow$) 
Let $S$ be a semigroup which is the disjoint union of the free semigroups
$\genset{a}$ and $\genset{b}$. Clearly one of the following must hold: 
\begin{enumerate}
\item[(a)] $ab, ba\in \genset{a}$,
\item[(b)]$ab, ba\in \genset{b}$,
\item[(c)]$ab\in \genset{a}$ and $ba\in \genset{b}$,
\item[(d)]$ab\in \genset{b}$ and $ba\in \genset{a}$. 

\end{enumerate}
In case (b), $S$ is isomorphic to a semigroup satisfying (a) and in case (d), $S$ is anti-isomorphic to a semigroup satisfying (c). Hence we may assume without loss of generality that (a) or (c) hold.
\vspace{\baselineskip}

\noindent {\bf Case (a)} There exist $m,n\in \N$ such that $ab=a^m$ and $ba=a^n$. Hence $$a^{m+1}=a^ma=(ab)a=a(ba)=aa^n=a^{n+1}$$
and so $m=n$. So, in this case, $S$ is a homomorphic image of the semigroup $T$ 
defined by the presentation $\langle\ a,b\ |\ ab=ba=a^m\ \rangle$. It follows
from Lemma \ref{homo-iso} that $S$ is isomorphic to $T$. 
\vspace{\baselineskip}

\noindent{\bf Case (c)}
There exist $m,n\in \N$ such that $ab=a^m$ and $ba=b^n$. So, in this case,
$$a^{m+1}=a^ma=(ab)a=a(ba)=ab^n=a^{m}b^{n-1}=\cdots=a^{n(m-1)+1}$$
and so $m=n(m-1)$, which implies that $m=n=2$. In this case, it follows that 
$S$ is a homomorphic image of the semigroup
defined by the presentation $\langle\ a,b\ |\ ab=a^2, ba=b^2\ \rangle$, and so
by Lemma \ref{homo-iso}, $S$ is isomorphic to this semigroup.
\qed


\section{Disjoint unions of three copies of the free monogenic semigroup}
\label{sec 3}
Let $S$ be a semigroup which is the disjoint union of the free monogenic
semigroups $\genset{a}$, $\genset{b}$, and $\genset{c}$. We will show that $S$
is determined, in some sense, by the values of the products $ab, ba, ac, ca,
bc, cb$. 
To this end, define the \emph{type} of $S$ to be 
$(A,B,C,D,E,F)$ where $A,B,C,D,E,F\in \{a,b,c\}$ 
if $ab\in \genset{A}$, $ba\in \genset{B}$, $ac\in \genset{C}$, $ca\in
  \genset{D}$, $bc\in \genset{E}$, $cb\in \genset{F}$. 
There are $3^6=729$ different types and so, potentially, $729$ different cases to consider in the proof of Theorem \ref{3copies}. In order to bring this number down to a more manageable $9$ cases, we require the following observations and lemma.
 
  If $S$ has type $(A,B,C,D,E,F)$, then reversing the order of multiplication in $S$ defines a semigroup anti-isomorphic to $S$ of type $(B,A,D,C,F,E)$. We will say that the types $(A,B,C,D,E,F)$ and $(B,A,D,C,F,E)$ are anti-isomorphic. Similarly, by renaming the generators of $S$ according to some permutation $\sigma$ of the set $\{a,b,c\}$, we obtain a semigroup isomorphic to $S$ of type $(U,V,W,X,Y,Z)$. We will say that $(A,B,C,D,E,F)$ is isomorphic to $(U,V,W,X,Y,Z)$ via $\sigma$. For example, $(b,a,b,a,c,b)$ is isomorphic to $(b,a,c,a,c,a)$ via the permutation $(cba)$.
One step in the proof of Theorem \ref{3copies} is to show that every $S$ is isomorphic or anti-isomorphic to a semigroup of one of only $9$ types.

\begin{lemma} \label{thm 3.1} 
Let $S$ be a semigroup which is the disjoint union of the free monogenic
semigroups $\genset{a}$, $\genset{b}$, and $\genset{c}$. Then one of
$\genset{a}\cup \genset{b}$, $\genset{a}\cup \genset{c}$, or 
$\genset{b}\cup \genset{c}$ is a subsemigroup of $S$.
\end{lemma}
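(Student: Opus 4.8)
The plan is to argue by contradiction, encoding the failure of each pairwise union to be a subsemigroup as a small Boolean feasibility problem and showing that problem is inconsistent.

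First I would record the closure criterion. For distinct generators $x,y$ with third generator $z$, a short induction shows that $\genset{x}\cup\genset{y}$ is a subsemigroup if and only if both $xy$ and $yx$ lie in $\genset{x}\cup\genset{y}$: if $xy=x^k$ then every $x^iy^j$ collapses to a power of $x$, and if $xy=y^k$ it collapses to a power of $y$, and symmetrically for $yx$. Call the product $xy$ \emph{escaping} if instead $xy\in\genset{z}$. Thus the pair $\{x,y\}$ fails to generate a subsemigroup exactly when $xy$ or $yx$ escapes. Assuming for contradiction that none of the three pairwise unions is a subsemigroup, each of the three pairs contributes at least one escaping product.

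The heart of the argument is an associativity cancellation applied to products of three distinct generators. Computing $xyz$ in the two bracketings, if $xy$ escapes then $xyz=(xy)z\in\genset{z}\cdot z\subseteq\genset{z}$, while if $yz$ escapes then $xyz=x(yz)\in x\cdot\genset{x}\subseteq\genset{x}$. Since the union is disjoint, $\genset{x}\cap\genset{z}=\emptyset$, so $xy$ and $yz$ cannot both escape. Running this over all six orderings of $a,b,c$ yields six mutual-exclusion constraints on the six escape indicators.

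Finally I would read off the combinatorics. Writing a Boolean flag for each of the six cross-products, the six constraints partition the flags into two triangles, one formed by the cyclic products $ab,bc,ca$ and one by their reverses $ba,cb,ac$; within each triangle the three flags are pairwise exclusive, so at most one flag per triangle is true, hence at most two flags are true in total. But the three failing pairs supply three disjunctions, one per pair, on the disjoint flag-pairs $\{ab,ba\}$, $\{bc,cb\}$, $\{ac,ca\}$, each of which contains exactly one flag from each triangle; satisfying all three disjunctions therefore requires at least three true flags. This is the contradiction. I expect the only real obstacle to be spotting the associativity cancellation that forbids two consecutive escapes and recognizing that the resulting exclusion graph is a disjoint pair of triangles; everything after that is a counting argument.
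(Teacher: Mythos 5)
Your proof is correct and rests on the same key step as the paper's: bracketing a three-letter product $xyz$ in two ways shows that $xy$ and $yz$ cannot both escape to the third generator, since that would place $xyz$ in two disjoint subsemigroups. The only difference is organizational --- the paper runs an explicit case analysis over the choices of one escaping product per pair (three identities plus an appeal to symmetry), whereas you collect all six exclusion constraints into two triangles and finish with a pigeonhole count; both are valid.
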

\begin{proof}
Seeking a contradiction suppose that none of $\genset{a}\cup \genset{b}$,
$\genset{a}\cup \genset{c}$, or
$\genset{b}\cup \genset{c}$ is a subsemigroup of $S$. 
Then  $ab$ or  $ba\in \genset{c}$, and $ac$ or $ca\in \genset{b}$, and 
$bc$ or $cb\in \genset{a}$. In each of these cases we will show that some
power of $a$, say, equals a power of $b$ or $c$, which will yield the
required contradiction. 

If $ab=c^i$ and $bc=a^j$, then 
$a^{j+1}=abc=c^{i+1}.$ 
If $ab=c^i$ and $ca=b^j$, then 
$b^{j+1}=cab=c^{i+1}$. 
If $ac=b^i$ and $cb=a^j$, then $b^{i+1}=acb=a^{j+1}$. 
The remaining cases follow by symmetry. 
\end{proof}

\vspace{\baselineskip}
\proofref{3copies}
($\Leftarrow$) 
We will show that the semigroup defined by any of the presentations in Theorem \ref{3copies}, and therefore any semigroup (anti-)isomorphic to it, is the disjoint union of three copies of the free monogenic semigroup.
It is straightforward to verify that every element of a semigroup defined by
any of the presentations is a power of $a$, $b$ or $c$. It
therefore suffices to show that $\genset{a}$, $\genset{b}$, and $\genset{c}$
are pairwise disjoint and infinite. 

As in the proof of Theorem \ref{2copies}, we show that $\genset{a}$,
$\genset{b}$, and $\genset{c}$ are infinite
by applying Lemma \ref{lem 3.7} to the respective 
congruences $\rho$ on $\{a,b,c\}^+$ generated by the relations in the
presentation of the relevant case, and the functions 
$\varphi:\{a,b,c\} \rightarrow \N$ defined by $\varphi(a)=1$ and
\begin{center}
  \begin{tabular}{c|c|c|c|c|c|c|c|c|c}
    &(i)&(ii)&(iii)&(iv)&(v)&(vi)&(vii)&(viii)&(ix)\\\hline
    $\varphi(b)$&$i-1$&$i-1$&$i-1$&$i-1$&$i-1$&1&1&1&1\\
    $\varphi(c)$&$j-1$&$j-1$&$i-1$&1&1&1&1&1&$i-1$\\
  \end{tabular}
\end{center}

To conclude this part of the proof we must show that $\genset{a}$,
$\genset{b}$, and $\genset{c}$ are pairwise disjoint. There are several cases
to consider.  In each of these cases, we let $S$ denote the semigroup defined
by the presentation in that case.\vspace{\baselineskip} 

\noindent{\bf Case (i).} 
In the semigroup defined by the presentation in case (i), 
no relation can be applied
to a power of $b$ or $c$, and so $\genset{a}$, $\genset{b}$, and $\genset{c}$
are disjoint (and the latter two are infinite). \vspace{\baselineskip} 

\noindent{\bf Cases (ii) to (vi) and (ix).}
The proofs that $\genset{a}$, $\genset{b}$, and $\genset{c}$
are pairwise disjoint in the semigroups defined by the presentations in
cases (ii) to (vi) and (ix) are similar, and so we present the proofs
simultaneously.  In each of these cases, let $T$ be the semigroup defined by
the respective multiplication table:
\begin{center}
 \begin{tabular}{c|ccc} 
(ii)&$a'$&$b'$&$c'$ \\ \hline 
$a'$&$a'$&$a'$&$a'$\\
$b'$&$a'$&$b'$&$b'$ \\ 
$c'$&$a'$&$b'$&$c'$\\ 
\end{tabular}\quad
\begin{tabular}{c|ccc} 
(iii)&$a'$&$b'$&$c'$ \\ \hline 
$a'$&$a'$&$a'$&$a'$\\
$b'$&$a'$&$b'$&$c'$ \\ 
$c'$&$a'$&$b'$&$c'$\\
\end{tabular}\quad
\begin{tabular}{c|ccc} 
(iv)&$a'$&$b'$&$c'$ \\ \hline 
$a'$&$a'$&$a'$&$c'$\\
$b'$&$a'$&$b'$&$c'$ \\ 
$c'$&$a'$&$a'$&$c'$\\ 
\end{tabular}\quad
\begin{tabular}{c|ccc} 
(v)&$a'$&$b'$&$c'$ \\ \hline 
$a'$&$a'$&$a'$&$c'$\\
$b'$&$a'$&$b'$&$c'$ \\ 
$c'$&$a'$&$c'$&$c'$\\ 
\end{tabular}\quad
\begin{tabular}{c|ccc} 
(vi)&$a'$&$b'$&$c'$ \\ \hline 
$a'$&$a'$&$b'$&$c'$\\
$b'$&$a'$&$b'$&$c'$ \\ 
$c'$&$a'$&$b'$&$c'$\\ 
\end{tabular}, \quad
\begin{tabular}{c|cccc} 
(xi)&$1$&$a'$&$b'$&$c'$ \\ \hline 
$1$&$1$&$a'$&$b'$&$c'$\\ 
$a'$&$a'$&$a'$&$b'$&$b'$ \\ 
$b'$&$b'$&$a'$&$b'$&$a'$\\ 
$c'$&$c'$&$a'$&$b'$&$1$\\ 
\end{tabular}   \\        
\end{center}
and let $\sigma:\{a,b,c\}\rightarrow T$ be defined by $\sigma(a)=a'$,
$\sigma(b)=b'$, and $\sigma(c)=c'$. If $\tau:\{a,b,c\}^+\to T$ is the unique
homomorphism extending $\sigma$, then it is routine to verify that
$\ker(\tau)$ contains the congruence $\rho$ generated by the relations in the
presentation for the corresponding case. 
Thus, in each case, the function $w/\rho\mapsto \tau(w)$ is a
homomorphism from $S=\{a,b,c\}^+/\rho$ onto $T$ (by \cite[Theorem
1.5.3]{Howie1995aa}).  Therefore  $\genset{a}$, $\genset{b}$, and $\genset{c}$
are pairwise disjoint. \vspace{\baselineskip} 

\noindent{\bf Case (vii).}
It is routine to verify that by applying any relation in presentation
(vii) to $w\in \{a,b,c\}^{+}$ ending in $a^2$, $cb$, or $ba$ we obtain another
word ending $a^2$, $cb$, or $ba$. Hence $\genset{a}$ is disjoint from
$\genset{b}\cup \genset{c}$. 
Similarly, by considering
words ending $ab$, $ca$, or $b^2$, it can be shown that
$\genset{b}$ is disjoint from $\genset{a}\cup \genset{c}$.
Hence $\genset{a}$, $\genset{b}$, and $\genset{c}$ are pairwise disjoint.
\vspace{\baselineskip} 

\noindent{\bf Case (viii).}
As in the previous case, it can be shown that applying any relation from the
presentation in (viii) to $w\in \{a,b,c\}^{+}$ ending $c$ we obtain another
such word. Hence $\genset{c}$ is disjoint from $\genset{a}\cup\genset{b}$.
Similarly, by considering words ending $ab$ or $b^2$, it can be shown that
$\genset{b}$ is disjoint from $\genset{a}\cup \genset{c}$, as required. 
\vspace{\baselineskip} 

($\Rightarrow$)        
By Lemma \ref{thm 3.1}, we may assume without loss of generality that 
$\genset{a}\cup \genset{b} $ is a subsemigroup of $S$.
Furthermore, we may assume that $S$ has type $(a,a,*,*,*,*)$ or $(b,a,*,*,*,*)$, since type $(b,b,*,*,*,*)$ is isomorphic to $(a,a,*,*,*,*)$ via $(ab)$ and $(a,b,*,*,*,*)$ is anti-isomorphic to $(b,a,*,*,*,*)$.

We will show that,
up to isomorphism and anti-isomorphism, the only possible types for $S$ are:
$(a,a,a,a,a,a)$, $(a,a,a,a,b,b)$, $(a,a,a,a,c,b)$, $(a,a,c,a,c,a)$, 
$(a,a,c,a,c,c)$, $(b,a,c,a,c,b)$, $(b,a,c,b,c,a)$, $(b,a,c,a,c,a)$, 
$(b,a,b,a,a,b)$.  

Suppose $S$ has type $(a,a,*,*,*,*)$.  By Theorem \ref{2copies}, it follows that $ab=a^i=ba$ for some $i\in \N$. 
If $ac=b^j$ for some $j\in\N$, then 
$b(ac)=b^{j+1}$ and $(ba)c=a^ic$. If $i=1$, then $(ba)c=b^{j}$ and so $j=j+1$,
a contradiction. If $i>1$ them 
$(ba)c=a^ic=a^{i-1}b^j\in \genset{a}$, which is also a contradiction. 
By symmetry, we obtain a contradiction under the assumption that 
$ca\in \genset{b}$. 
It follows that $ac,ca\in \genset{a}\cup \genset{c}$. 

In the case that $ac,ca\in \genset{a}$, we
will show that $bc, cb\in \genset{a}$ or $bc,cb\in \genset{b}\cup \genset{c}$. 
Suppose not. Then, say, $bc\in \genset{a}$ and $cb\in \genset{b}\cup
\genset{c}$. It follows that $(cb)(cb)\in \genset{b}\cup \genset{c}$ but
$(c(bc))b\in \genset{a}$. The case that $cb\in \genset{a}$ and $bc\in
\genset{b}\cup \genset{c}$ follows by a similar argument. 
We have shown that the only $S$ of type $(a,a,a,a,*,*)$ are 
$(a,a,a,a,a,a)$, $(a,a,a,a,b,b)$, $(a,a,a,a,b,c)$, $(a,a,a,a,c,b)$, and 
$(a,a,a,a,c,c)$.

If $ac\in \genset{c}$ and $bc\in \genset{a}\cup \genset{b}$, then $a(bc)\in
\genset{a}$ and $(ab)c\in \genset{c}$, a contradiction. Similarly, if
$ca\in \genset{c}$ and $cb\in \genset{a}\cup \genset{b}$, then $(cb)a\in
\genset{a}$ but $c(ba)\in
\genset{c}$. It follows that if $S$ is of type $(a,a,c,*,*,*)$, then 
$S$ is of type $(a,a,c,*,c,*)$ and if $S$ is of type $(a,a,*,c,*,*)$, then $S$
is of type $(a,a,*,c,*,c)$. It follows that the only $S$ of type
$(a,a,c,c,*,*)$ are of type $(a,a,c,c,c,c)$. 

If $ca\in \genset{c}$ and $bc\in \genset{b}$, then $b(ca)\in \genset{b}$ and $(bc)a\in \genset{a}$, a
contradiction. 
Hence the only $S$ of type $(a, a, a, c, *, c)$ are of type
$(a,a,a,c,a,c)$ or $(a,a,a,c,c,c)$. It follows by symmetry that the
only $S$ of type $(a, a, c, a, c, *)$ are of type
$(a,a,c,a,c,a)$ or $(a,a,c,a,c,c)$.

Therefore if $S$ is a semigroup of type $(a,a,*,*,*,*)$, then
$S$ has one of the following types: $(a,a,a,a,a,a)$,
$(a,a,a,a,b,b)$, $(a,a,a,a,c,c)$, $(a,a,a,a,b,c)$,
$(a,a,a,a,c,b)$, $(a,a,c,c,c,c)$, $(a,a,c,a,c,a)$, $(a,a,c,a,c,c)$, 
$(a,a,a,c,a,c)$, $(a,a,a,c,c,c)$. 

However, $(a,a,a,a,b,c)$, $(a,a,a,c,a,c)$ and $(a,a,a,c,c,c)$ are anti-isomorphic to $(a,a,a,a,c,b)$, $(a,a,c,a,c,a)$ and $(a,a,c,a,c,c)$, respectively.
Moreover, $(a,a,a,a,c,c)$ is isomorphic to $(a,a,a,a,b,b)$ via $(bc)$ and $(a,a,c,c,c,c)$ is isomorphic to $(a,a,a,a,b,b)$ via $(cab)$. This
leaves the tuples given at the start of this part of the proof. 

Suppose that $S$ has a type $(b,a,*,*,*,*)$ which is not isomorphic to $(a,a,*,*,*,*)$. Then $S$ does not have type $(*,*,a,a,*,*)$, $(*,*,c,c,*,*)$, $(*,*,*,*,b,b)$ or $(*,*,*,*,c,c)$. We prove that the only possible $S$ of type
$(b,a,*,*,*,*)$ are:
$$(b,a,b,a,a,b), (b,a,b,a,c,b), (b,a,c,b,c,a), (b,a,c,b,c,b), (b,a,c,a,c,a),
(b,a,c,a,a,b), (b,a,c,a,c,b).$$

Since $(b,a,b,a,c,b)$, $(b,a,c,b,c,b)$ and $(b,a,c,a,a,b)$ are isomorphic to $(b,a,c,a,c,a)$ via $(cba)$, $(ab)$ and $(bc)$, respectively, this 
will leave the tuples given at the start of this part of the proof. 
It suffices to prove the following:

\begin{enumerate}[(a)]
\item if $S$ has type $(b,a,a,*,*,*)$, then it has already been considered;
\item if $S$ has type $(b,a,b,*,*,*)$, then it has type
$(b,a,b,a,*,b)$;
\item if $S$ has type $(b,a,c,b,*,*)$, then it has type $(b,a,c,b,c,*)$;
\item if $S$ has type $(b,a,c,a,*,a)$, then it has type $(b,a,c,a,c,a)$;
\item $S$ cannot have type $(b,a,c,a,*,c)$.
\end{enumerate}
\vspace{\baselineskip}

\noindent{\bf Case (a).}
If $cb\in \genset{a}\cup \genset{c}$, then
$a(cb)\in \genset{a}$ but $(ac)b\in \genset{b}$, a contradiction. 
Hence $cb\in \genset{b}$. 
If $ca\in \genset{c}$, then $(cb)a\in \genset{a}$ but 
$c(ba)\in \genset{c}$. If $ca\in \genset{b}$, then
$a(ca)\in \genset{b}$ but $(ac)a\in \genset{a}$. Hence $ca\in\genset{a}$ and so
$S$ has type $(b,a,a,a,*,*)$. It follows that $S$ is isomorphic or
anti-isomorphic to a semigroup of type $(a,a,*,*,*,*)$. 
\vspace{\baselineskip}

\noindent{\bf Case (b).}
If $ca\in \genset{b}\cup\genset{c}$, then $a(ca)\in \genset{b}$ but $(ac)a\in
\genset{a}$. Hence $ca\in \genset{a}$ and $S$ has type $(b,a,b,a,*,*)$. 
If $cb\in\genset{c}$, then $c(ab)\in \genset{c}$ but $(ca)b\in \genset{b}$. 
If $cb\in\genset{a}$, then $a(cb)\in \genset{a}$ and $(ac)b\in\genset{b}$.
Thus $S$ has type $(b,a,b,a,*,b)$, as required. 
\vspace{\baselineskip}

\noindent{\bf Case (c).}
If $bc\in \genset{a}$, then $b(ca)\in \genset{b}$ but $(bc)a\in \genset{a}$. 
If $bc\in \genset{b}$, then $b(ac)\in \genset{b}$ but $(ba)c\in \genset{c}$. 
Hence $S$ has type $(b,a,c,b,c,*)$, as required. 
\vspace{\baselineskip}

\noindent{\bf Case (d).}
If $bc\in \genset{a}\cup\genset{b}$, then $b(cb)\in \genset{a}$ but $(bc)b\in
\genset{b}$. Therefore $S$ has type $(b,a,c,a,c,a)$, as required.
\vspace{\baselineskip}

\noindent{\bf Case (e).}
If $S$ has type $(b,a,c,a,*,c)$, then $c(ab)\in \genset{c}$ but
$(ca)b\in\genset{b}$, a contradiction.
\vspace{\baselineskip}

It remains to show that if $S$ has one of the types
given at the start of the proof, then $S$ is isomorphic to a semigroup defined
by one of the presentations in the theorem. By Lemma
\ref{homo-iso}, it suffices to show that the generators $a$, $b$, and $c$ of
$S$ satisfy the relations in one of the presentations. 

Suppose that $S$ has type $(a,a,a,a,a,a)$.
Then $\genset{a}\cup\genset{b}$ and $\genset{a}\cup\genset{c}$ are
subsemigroups of $S$ and hence by Theorem \ref{2copies} $ab=a^i=ba$ and
$ac=a^j=ca$ for some $i,j\in\N$. If $bc=a^k$ and $cb=a^l$ in $S$, then 
$a^{k-1+i}=a^kb=bcb=ba^l=a^{l-1+i}$ and so $k=l$. 
Also $a^{i+j}=a(bc)a=a^{k+2}$ and so $i+j=k+2$. 
Thus the presentation in (i)
defines a semigroup isomorphic to $S$.

If $S$ has type $(a,a,a,a,b,b)$, then $\genset{a}\cup\genset{b}$,
$\genset{a}\cup\genset{c}$, and $\genset{b}\cup\genset{c}$ are subsemigroups of
$S$, and so $ab=ba=a^i$, $ac=ca=a^j$, and $bc=cb=b^k$ for some $i,j,k\in \N$ (by
Theorem \ref{2copies}). Also $a^{i+j}=abca=ab^ka=a^{ik-k+2}$, and so
$i+j=ik-k+2$. 
So, the presentation in (ii) defines $S$.

If $S$ has type $(a,a,a,a,c,b)$, then again $\genset{a}\cup\genset{b}$,
$\genset{a}\cup\genset{c}$, and $\genset{b}\cup\genset{c}$ are subsemigroups of
$S$. Hence, by Theorem \ref{2copies}, $ab=ba=a^i$, $ac=ca=a^j$, $bc=c^2$, and
$cb=b^2$. Also $a^{2i-1}=a^ib=ab^2=acb=a^jb=a^{j-1+i}$ and so $i=j$, 
and $S$ is defined by the presentation in (iii).

If $S$ has type $(a,a,c,a,c,a)$, then $\genset{a}\cup\genset{b}$ and 
$\genset{a}\cup\genset{c}$ are subsemigroups of $S$. Hence $ab=ba=a^i$ for some
$i\in \N$, $ac=c^2$, and $ca=a^2$. If $bc=c^{j}$ and $cb=a^k$ for some
$j,k\in\N$, then $c^{j+2}=acbc=a^{k+1}c=c^{k+2}$ and so $j=k$. Furthermore, 
$a^{i+k}=abcb=ac^jb=c^{j+1}b=c^ja^k=a^{j+k}$ and so $i=j$, 
and $S$ is defined by the presentation in (iv).

If $S$ has type $(a,a,c,a,c,c)$, then $\genset{a}\cup\genset{b}$, 
$\genset{a}\cup\genset{c}$, and $\genset{b}\cup\genset{c}$ are subsemigroups of
$S$. Hence $ab=ba=a^i$, $ac=c^2$, $ca=a^2$, and $bc=cb=c^j$ for some
$i,j\in\N$. It follows that $a^{i+2}=abca=ac^ja=a^{j+2}$ and so $i=j$.  
This implies that $S$ is defined by the presentation in (v).

If $S$ has type $(b,a,c,a,c,b)$, 
then $\genset{a}\cup\genset{b}$, $\genset{a}\cup\genset{c}$, and
$\genset{b}\cup\genset{c}$ are subsemigroups of $S$ and so, by Theorem
\ref{2copies}, $S$ is defined by the presentation in
(vi).

If $S$ has type $(b,a,c,b,c,a)$,
then $\genset{a}\cup\genset{b}$ is a subsemigroup of $S$ and so $ab=b^2$ and
$ba=a^2$ by Theorem \ref{2copies}. Suppose that $ac=c^i$, $ca=b^j$, $bc=c^k$,
and $cb=a^l$.
Then $c^{2k-1}=bc^k=b^2c=(ab)c=a(bc)=ac^k=c^{i+k-1}$
which implies that $2k=i+k$ and so $i=k$.
Also
$c^{il-l+1}=a^{l-1}c^i=a^lc=(cb)c=c(bc)=cc^k=c^{k+1}$
and so $k=il-l$. Thus, since $i=k$, it follows that $k=(k-1)l$ and so $i=k=l=2$.
Finally, $a^{l+1}=(cb)a=c(ba)=ca^2=b^ja=a^{j+1}$ and so $j=l=2$.
We have shown that $S$ is defined by the presentation in (vii). 

If $S$ has type $(b,a,c,a,c,a)$, then $\genset{a}\cup\genset{b}$ and
$\genset{a}\cup \genset{c}$ are subsemigroups of $S$ and so $ab=b^2$, $ba=a^2$,
$ac=c^2$, and $ca=a^2$ by Theorem \ref{2copies}.  
If $bc=c^k$ and $cb=a^l$, then 
$a^{l+1}=ba^l=b(cb)=(bc)b=c^kb=c^{k-1}a^l=a^{k+l-1}$ and so $k=2$. 
Also $c^3=c(bc)=(cb)c=a^lc=c^{l+1}$ which implies that $l=2$. It follows that
$S$ is defined by the presentation in (viii).  

If $S$ has type $(b,a,b,a,a,b)$, then $\genset{a}\cup \genset{b}$ is a
subsemigroup of $S$ and so $ab=b^2$ and $ba=a^2$ by Theorem \ref{2copies}. 
Suppose $ac=b^i$, $ca=a^j$, $bc=a^k$, and $cb=b^l$. Then 
$a^{j+2}=baca=b^{i+1}a=a^{i+2}$ and so $i=j$. Also 
$b^{l+2}=abcb=a^{k+1}b=b^{k+2}$ and so $k=l$. Finally,
$b^{k+2}=a^kb^2=a^kab=b(ca)b=ba^jb=a^{j+1}b=b^{j+2}$ and so $j=k$. 
It follows that $S$ is defined by the presentation in (ix), and the proof is
complete. 
\qed

\bibliographystyle{plain}

\vspace{\baselineskip}

\begin{flushleft}
  School of Mathematics and Statistics\\
  University of St Andrews\\
  St Andrews KY16 9SS\\
  Scotland, U.K.\\
\smallskip
\texttt{\{nabilah,jamesm,yperesse,nik\}@mcs.st-and.ac.uk}
\end{flushleft}

\end{document}